\newcommand{\F}{\mathcal {F}} %% PLEASE DO NOT 
\newcommand{\RR}{\mathbb R} %% REVMOVE 
\newcommand{\NN}{\mathbb N} %% THESE 
\newcommand{\ZZ}{\mathbb Z}%% COMMANDS
\newcommand{\PP}{\mathbb{P}} %%
\newcommand{\eps}{\varepsilon}%%
\begin{document}
\title*{Almost sure rates of mixing for random intermittent maps}
\author{Marks Ruziboev}

\institute{Marks Ruziboev \at Department of Mathematical Sciences, Loughborough University, Loughborough, Leicestershire, LE11 3TU, UK
\email{M.Ruziboev@lboro.ac.uk}\\ 
Dedicated to Abdulla Azamov and Leonid Bunimovich on the occasion of their 70th birthday}

%\subjclass[2010]{37A25, 37A50 37C40  37E05 }
%\date{\today}
%\keywords{Random dynamical systems, slowly mixing systems, quenched decay of correlations.}

\maketitle

\abstract{We consider a family $\mathcal F$ of maps with two branches and a common neutral  fixed point $0$ such that the order of tangency at $0$  belongs to  some interval $[\alpha_0, \alpha_1]\subset (0, 1)$.  Maps in $\mathcal F$ do not necessarily share a common Markov partition.  At each step a member of $\mathcal F$ is chosen independently with respect to the uniform distribution on $[\alpha_0, \alpha_1]$.  We show that the construction of the random  tower in Bahsoun-Bose-Ruziboev \cite{BBR}   with \emph{general return time} can be carried out for  random compositions of such maps. Thus their  general results are applicable and gives upper bounds for the quenched decay of correlations of form $n^{1-1/\alpha_0+\delta}$ for any $\delta>0$. }

\section{Introduction}
In recent years there has been remarkable interest  in studying statistical properties of random dynamical systems induced by random compositions of different maps (see for example \cite{ANV}-\cite{BBMD}, \cite{HRY}, \cite{K}, \cite{LS}, \cite{NTV} and references therein).  In \cite{BBD}  i.i.d. random compositions of two Liverani-Saussol-Vaienti (LSV)\footnote{A subclass of the so called  Pomeau-Manneville maps introduced in \cite{PM}, and  popularised by Liverani, Saussol and Vaienti in \cite{LSV}.  Such systems have attracted attention of both mathematicians and physicists  (see \cite{LS} for a recent work in this area).} maps were considered and
it was shown that the rate of decay of the annealed (averaged over all realisations) correlations is given by the fast dynamics.  Recently  the general results on quenched decay rates (i.e. decay rates for almost every realisation) for the random compositions of non-uniformly expanding maps  were obtained in \cite{BBR}.  As an illustration it was shown ibidem that the general results are applicable to the random map induced by  compositions of LSV   maps with parameters in $[\alpha_0, \alpha_1]\subset(0, 1)$ chosen with respect to a suitable distribution $\nu$ on $[\alpha_0, \alpha_1]$. In the current note we, fix the uniform distribution on $[\alpha_0, \alpha_1]$ and consider a  family of maps with  common neutral fixed point. Our maps do \emph{not} share a common Markov partition.  We show that the construction of the random  tower of \cite{BBR}   with \emph{general return time} can be carried out for the random compositions of such maps. Hence the main result of \cite{BBR} is applicable. We obtain upper bounds for the quenched decay of correlations of the form $n^{1-1/\alpha_0+\delta}$ for any $\delta>0$.  

The paper is organised as follows. In Section \ref{setup}, we give  a formal definition of the family $\F$ and state the main result of the paper (Theorem \ref{main}). In Section \ref{induce},  we construct uniformly expanding  induced  random map and show  that the assumptions required in \cite{BBR}  are satisfied, i.e. we check  uniform expansion, bounded distortion, decay rates for the tail of the return time and aperiodicity. Also we formulate a  technical proposition in this section which is used to obtain the tail estimates and proved in  Section \ref{pof:key}.

\section{The set up and the main results}\label{setup}
In this section we define the main object of the current note: the random maps.
Fix two real numbers $0<\alpha_0<\alpha_1<1$. Let $I=[0, 1]$ and let $\F$ be a parametrised family of maps  $T_\alpha: I\to I$, $\alpha\in [\alpha_0, \alpha_1]$  with the following properties. 
\begin{itemize}
\item[(A1)] { $ \quad$ There exists a $C^1$ function $x:[\alpha_0, \alpha_1]\to (0, 1)$,  $\alpha \mapsto x_\alpha$   such that $T_\alpha: [0, x_\alpha)\to  [0, 1)$ and $T_\alpha: [x_\alpha, 1]\to [0, 1]$ are  increasing diffeomorphisms.}  
\item[(A2)] { $ \quad$$T_\alpha'(x)>1$ for any $x>0$. }
\item[(A3)]  $ \quad$There exists $\eps_0>0$ and continuous functions $\alpha\mapsto c_\alpha$,  $(x, \alpha)\mapsto f_\alpha(x)$ such that $f_\alpha(0)=0$ and $T_\alpha(x)=x+c_\alpha x^{1+\alpha}(1+f_\alpha(x))$ for any $x\in [0, \eps_0]$.
\item[(A4)]  $ \quad$Every  $T_\alpha$ is $C^3$ on $(0, x_\alpha]$ with negative Schwarzian derivative.
\item[(A5)] $ \quad$ $(x, \alpha)\mapsto T_\alpha''(x)$ and $(x, \alpha)\mapsto T_\alpha'(x)$ are continuous  on $I\times[\alpha_0, \alpha_1]$.
\end{itemize}

Notice that the elements of $\F$ are parametrised according to the tangency near $0$. 
Now, we describe the randomising dynamics. 
Let $\eta$ be the normalised Lebesgue measure on $[\alpha_0, \alpha_1]$.  Let $\Omega=[\alpha_0, \alpha_1]^{\ZZ}$ and $\PP=\eta^{\ZZ}$. Then the shift map $\sigma:\Omega \to \Omega$ preservers $\PP$, i.e. $\sigma_\ast \PP=\PP$. For $\omega\in \Omega$, $\omega=\dots \omega_{-1}, \omega_0, \omega_1, \dots$ let $\alpha(\omega)=\omega_0\in [\alpha_0, \alpha_1]$. 
The \emph{random map} is formed by random compositions of maps $T_{\alpha(\omega)}:I\to I$ from $\F$, where the compositions are defined as  $T_{\omega}^n(x)=T_{\alpha(\sigma^{n-1}(\omega))}\circ\dots \circ T_{\alpha(\omega)}(x)$. Below we use more shorter notation  $T_\omega^n=T_{\omega_{n-1}}\circ\dots\circ T_{\omega_0}(x)$. 
We are interested in studying the statistical properties of equivariant families of measures i.e. families of measures $\{\mu_{\omega}\}_{\omega\in \Omega}$ such that $(T_{\omega})_\ast\mu_{\omega}=\mu_{\sigma\omega}$. Let $\mu$ be a probability measure on $I\times \Omega$ such that $\mu(A)=\int_\Omega\mu_\omega(A)d\PP(\omega)$ for $A\subset I\times \Omega$. We say that the system $\{ f_\omega, \mu_\omega\}_{\omega \in \Omega}$ (or simply $\{\mu_\omega\}_\omega$) is \emph{mixing} if  
for all $\varphi, \psi \in L^2(\mu)$, 
$$
\lim_{n\to \infty}\left|\int_\Omega\int_0^1\varphi_{\sigma^n\omega}\circ f^n_\omega\cdot\psi_\omega d\mu_\omega d\PP -\int_\Omega\int_0^1\varphi_{\omega}d\mu_\omega d\PP\int_\Omega\int_0^1\psi_{\omega}d\mu_\omega d\PP\right| =0.
$$
Further, future and past correlations are defined as follows. Let 
$\varphi, \psi :I\to \RR$ be two observables on $I$. Then we define \emph{future correlations } as 
$$
Cor_{\mu}^f(\varphi, \psi):= 
\left| \int (\varphi \circ T^{n}_\omega)\psi d\mu_{\sigma^n\omega}-\int \varphi d\mu_{\sigma^n\omega}\int \psi d\mu_{\omega}\right|
$$
and \emph{past correlations} as 
$$
Cor_{\mu}^p(\varphi, \psi):= 
\left| \int (\varphi \circ T^{n}_{\sigma^{-n}\omega})\psi d\mu_\omega-\int \varphi d\mu_{\omega}\int \psi d\mu_{\sigma^{-n}\omega}\right|.
$$
\begin{theorem}\label{main}
Let $T_{\omega}$ be the random map described above.  Then for almost every 
$\omega \in \Omega$ there exists a family of absolutely continuous equivariant measures $\{\mu_\omega\}_\omega$ on $I$ which is mixing. 
Moreover, for every $\delta>0$ there exists a full measure subset $\Omega_0\subset \Omega$ and a random variable $C_\omega: \Omega\to \RR_+$ which is finite on $\Omega_0$ such that for any $\varphi\in L^\infty(I)$, $\psi \in C^\eta(I)$ there exists a constant $C_{\varphi, \psi}>0$ so that 
$$
Cor_{\mu}^f(\varphi, \psi)\le C_\omega C_{\varphi, \psi}n^{1-\frac{1}{\alpha_0} +\delta}\,\, \text{and} \,\, Cor_{\mu}^p(\varphi, \psi)\le C_\omega C_{\varphi, \psi}n^{1-\frac{1}{\alpha_0} +\delta} .
$$
Furthermore,  there exist constants $C>0$, $u'>0$ and $0< v' <1$ such that
$$
P\{ C_\omega >n\} \leq Ce^{-u'n^{v'}}.
$$
\end{theorem}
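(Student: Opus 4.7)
The plan is to reduce Theorem~\ref{main} to the general quenched decay-of-correlations result of~\cite{BBR} by constructing a random Young tower with general return times over a suitable reference set $Y\subset I$ bounded away from the neutral fixed point. Once we verify the BBR abstract hypotheses — uniform expansion, bounded distortion, polynomial decay of the random return time tail, and aperiodicity — the main theorem of~\cite{BBR} furnishes the equivariant family $\{\mu_\omega\}_{\omega\in\Omega}$, the polynomial rate $n^{1-1/\alpha_0+\delta}$ for both future and past correlations, and the stretched-exponential control on $C_\omega$ automatically. The non-Markov nature of $\F$ is precisely the feature that forces us to use \emph{general} (rather than first) return times, which is exactly the setting accommodated by~\cite{BBR}.

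First I would fix a reference interval $Y$ bounded away from $0$ on which every $T_\alpha\in\F$ is uniformly expanding by (A2), and for each $\omega\in\Omega$ define a random return time $R_\omega:Y\to\NN$ and a countable partition $\{Y_j(\omega)\}$ of $Y$ modulo zero such that $T_\omega^{R_\omega}:Y_j(\omega)\to Y$ is a diffeomorphism with uniformly bounded distortion. The construction mimics the hyperbolic-times scheme used in~\cite{BBR} for LSV maps: one tracks orbits through their excursions into a neighbourhood $[0,\eps_0]$ of the neutral fixed point, where (A3) describes the slow dynamics, and declares a return only when enough hyperbolic expansion has been accumulated after the excursion ends. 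Uniform expansion on the induced map follows from (A2) together with the positive expansion gained on each excursion, while bounded distortion of $T_\omega^{R_\omega}$ on each $Y_j(\omega)$ follows from (A4) (negative Schwarzian) together with the continuity in (A5), by the standard Koebe-type argument.

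The main obstacle, and the technical heart of the paper, is the tail estimate for $R_\omega$. The key point is that an orbit of $T_\omega$ starting near $0$ escapes at a rate controlled, up to multiplicative constants coming from $\inf c_\alpha$ and the $C^1$ function $\alpha\mapsto x_\alpha$, by the slowest possible sequence of parameters. Using (A3), the deterministic estimate $T_{\alpha_0}^n(x)\sim(n\alpha_0 c_{\alpha_0})^{-1/\alpha_0}$ provides an upper envelope for escape times under random compositions. Since the uniform distribution $\eta$ places positive mass on every neighbourhood of $\alpha_0$, a Borel--Cantelli argument applied to the lengths of excursions into $[0,\eps_0]$ converts the deterministic $n^{-1/\alpha_0}$ tail into the $\PP$-almost sure bound
\begin{equation*}
\mathrm{Leb}\{x\in Y:R_\omega(x)>n\}\le C(\omega)\,n^{-1/\alpha_0+\delta/2},
\end{equation*}
at the cost of the $\delta/2$ loss, with $C(\omega)$ satisfying a stretched-exponential tail in $\PP$ by a standard large-deviation estimate. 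This is the content of the technical proposition announced in Section~\ref{induce} and proved in Section~\ref{pof:key}.

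Finally, aperiodicity in the sense of~\cite{BBR} holds because the continuity in (A5) together with the non-Markov structure forces $R_\omega$ to take all sufficiently large integer values on sets of positive measure, so $\gcd\{R_\omega\}=1$ for $\PP$-almost every $\omega$. Mixing of $\{\mu_\omega\}_\omega$ follows from the standard coupling argument on the tower. Plugging the verified hypotheses into the main theorem of~\cite{BBR} yields the decay rates $n^{1-1/\alpha_0+\delta}$ for both $\mathrm{Cor}_\mu^f$ and $\mathrm{Cor}_\mu^p$, together with the stretched-exponential bound $\PP\{C_\omega>n\}\le Ce^{-u'n^{v'}}$, completing the proof of Theorem~\ref{main}.
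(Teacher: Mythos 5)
Your high-level strategy is right: reduce Theorem~\ref{main} to the abstract quenched result of~\cite{BBR} by constructing a random tower and verifying (P1)--(P4). But there are two places where the sketch either deviates from what would actually work or papers over the genuine technical content.

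First, the base set and return-time structure. The paper does \emph{not} use a reference interval $Y$ bounded away from $0$ with a hyperbolic-times scheme; it takes $\Lambda=(0,1]$ itself and partitions it into the intervals $I_n(\omega)=(X_n(\omega),X_{n-1}(\omega)]$ defined by pulling back $x_{\alpha(\omega)}$ along the left branches, with $R_\omega\equiv n$ on $I_n(\omega)$. This yields a full-branch map $T_\omega^{R_\omega}:I_n(\omega)\to(0,1]$ automatically, and distortion follows by Koebe with an explicit Schwarzian extension. Your $Y$-based variant is not wrong in spirit, but it buys you nothing here and requires additional bookkeeping about re-entries into $[0,\eps_0]$; you should realize that for LSV-type two-branch structure the natural escape partition of $(0,1]$ is the cleaner construction. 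The same goes for aperiodicity: you invoke continuity and the non-Markov structure to argue $R_\omega$ takes all large values, but the paper's observation is simply that $x_\alpha<1$ uniformly, so $\{R_\omega=1\}=(x_{\alpha(\omega)},1]$ already has measure bounded below and $\gcd=1$ is immediate.

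Second, and more seriously, the tail estimate. You claim the deterministic bound $T_{\alpha_0}^n(x)\sim(n\alpha_0 c_{\alpha_0})^{-1/\alpha_0}$ ``provides an upper envelope for escape times under random compositions'' and that a Borel--Cantelli argument then converts this into the almost-sure rate. This is false: the deterministic envelope for arbitrary compositions over $[\alpha_0,\alpha_1]$ gives $X_n(\omega)\le C_0 n^{-1/\alpha_1}$, which is the \emph{slow} rate governed by $\alpha_1$, not the fast rate governed by $\alpha_0$ (see inequality~\eqref{estm:twosided}). There is no deterministic $n^{-1/\alpha_0}$ envelope, and the entire technical content of the paper is precisely to show that, although the worst case is $n^{-1/\alpha_1}$, almost every $\omega$ nevertheless sees $X_n(\omega)\le C(n^{-1}\log n)^{1/\alpha_0}$ for $n\ge n_1(\omega)$, with $\PP\{n_1>n\}\le Ce^{-un^v}$. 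This is proved not by Borel--Cantelli on excursion lengths, but by writing $X_n(\omega)^{-\alpha_0}$ in terms of a sum $S_n$ of bounded i.i.d.-type increments in the parameters $\alpha(\sigma^{n-k}\omega)$, computing $E_{\PP}(S_n)\sim C_4 n/\log n$ via an exact exponential-moment calculation (Lemma~\ref{lem:expec}, using the uniform distribution), and then applying Hoeffding's inequality to concentrate $S_n$ around its mean. Without some version of that concentration step your argument cannot reach the $n^{1-1/\alpha_0+\delta}$ rate; a naive Borel--Cantelli only yields bounds at the $\alpha_1$-scale.

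Finally, a minor point: verifying the summability condition in (P3), $\sum_n m\{R_\omega>n\}\le M$ uniformly in $\omega$, does use the deterministic $n^{-1/\alpha_1}$ bound and the hypothesis $\alpha_1<1$; your sketch omits this, and it is the only place that bound enters.
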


\begin{remark}
Notice that in the deterministic setting every mapping in the family $\F$ admits an absolutely continuous invariant probability measure, which is polynomially mixing  at the rate $n^{1-1/\alpha}$ if $T_\alpha(x)=x+c_\alpha x^{1+\alpha}(1+f_\alpha(x))$ (see \cite{Y99},  \cite{G06}).  In the random setting the  upper bounds we give are arbitrarily close to the sharp decay rates of the fastest mixing system in the family. 
Since the result holds for almost every $\omega\in\Omega$, and in principle there can be arbitrarily long compositions of systems in $T^n_\omega$ whose mixing rates are slower than that of $T_{\alpha_0}$ it is not expected that the mixing rate of the random system will be the same as the mixing rate of the fastest mixing system in the family $\F$ and $C_\omega$ integrable at the same time.  
\end{remark}
\begin{remark}We also remark that we are choosing the family $\F$ so that all the maps in it share the common neutral fixed point $0$.  If we choose the family by allowing different maps having distinct neutral fixed points i.e. $T_\alpha (p(\alpha))=p(\alpha)$, $T_\alpha' (p(\alpha))=1$ and  $p(\alpha)\neq 0$  for  a positive (with respect to $\nu$) measure set of parameters $\alpha\in[\alpha_0, \alpha_1]$   and expanding elsewhere, then the resulting random map is expanding on average. Whence one can apply spectral  techniques as in \cite{Bu} on the Banach space of quasi-H\"older functions from \cite{Kel85} or \cite{Be00} and obtain exponential decay rates. Such systems are out of context in our setting since we are after systems with only polynomial decay of correlations.  
\end{remark}

To prove the theorem we construct a random induced map (or Random Young Tower)  for $T_{\omega}$  with the properties described in \cite{BBR}. Below we briefly recall the definition of induced map. 

Let  $m$ denote the Lebesgue measure on $I$ and  $\Lambda\subset I$ be a measurable subset. We say $T_{\omega}$ admits a Random Young Tower with the base $\Lambda$ if for almost every $\omega\in\Omega$  there exists a countable partition $\{\Lambda_j(\omega)\}_j$ of $\Lambda$ and  a return time function $R_\omega:\Lambda\to \mathbb N$ that is constant on each $\Lambda_j(\omega)$ such that 
\begin{itemize}
\item[(P1)]  $ \quad$ for each $\Lambda_j(\omega)$ the induced map $T^{R_\omega}_\omega|_{\Lambda_j(\omega)}\to \Lambda$ is a diffeomorphism and there exists a constant $\beta> 1$ such that $(T^{R_\omega}_\omega)'>{\beta}$.
\item[(P2)] $ \quad$ There exists $\mathcal D>0$ such that for all $\Lambda_j(\omega)$ and $x, y\in \Lambda_j(\omega)$ 
$$
\left|\frac{(T^{R_\omega}_\omega)'x}{(T^{R_\omega}_\omega)'y}-1\right|\le \mathcal D\beta^{-s(T^{R_\omega}_\omega(x), T^{R_\omega}_\omega(y))},
$$
where $s(x, y)$ is the smallest $n$ such that $(T^{R_\omega}_\omega)^nx$ and $(T^{R_\omega}_\omega)^ny$ lie in distinct elements.
\item[(P3)] $ \quad$  There exists $M>0$ such that  
$$\sum_{n}m\{x\in\Lambda\mid R_\omega(x)>n\}\le M \text{ for all } \omega\in\Omega.$$ 
There exist constants $C, u, v>0$,  $a>1$, $b\ge 0$, a full measure subset $\Omega_1\subset \Omega$,   and a random variable $n_1:\Omega_1\to \mathbb N$ so that
 \begin{equation}
\begin{cases} \label{tail}
m\{x\in\Lambda\mid R_\omega(x)>n\}\le C\frac{(\log n)^b}{n^a}, \,\,\text{whenever} \,\,n\ge n_1(\omega),\\
\PP\{n_1(\omega)>n\} \le C e^{-un^v},
\end{cases}
\end{equation}
\begin{equation}\label{tailann}
\int m\{x\in\Lambda |\, R_\omega=n\}d\PP(\omega)\le C\frac{(\log n)^{b}}{n^{a+1}}.
\end{equation}
\item[(P4)] $ \quad$ There are $N\in\mathbb N$ and 
$\{t_i\in\mathbb Z_+\mid i=1, 2, ..., N\}$   such that g.c.d.$\{t_i\}=1$ and $\epsilon_i >0$ so that
for almost every $\omega \in \Omega$ and $i = 1, 2, \dots N$ we have
$m\{x\in\Lambda\mid R_\omega(x)=t_i\}>\epsilon_i$.
\end{itemize}
Under the above assumptions it is proven in \cite{BBR} that there exists a family of absolutely continuous equivariant measures \cite[Theorem 4.1]{BBR}, which is mixing and the mixing rates have upper bound of the form  ${n^{1+\delta-a}}$ for any $\delta>0$  \cite[Theorem 4.2]{BBR}. Therefore to prove Theorem  \ref{main}  it is sufficient to construct an induced map $T^{R_\omega}_\omega$ with the properties 
(P1)-(P4), which is carried out in  the next section.

\section{Inducing scheme}\label{induce}
Here we will construct a uniformly expanding full branch induced random map on  $\Lambda=(0, 1]$ for  every $\omega \in \Omega$.  Let 
$X_0(\omega)=1$,  $X_1(\omega)=x(\omega_0)=x_{\alpha(\omega)}$ and 
$$X_n(\omega)=(T_\omega|_{[0, x(\omega_0))})^{-1}X_{n-1}(\sigma\omega) \text{ for }  n\ge 2.$$ 
Let $I_n(\omega)=(X_n(\omega), X_{n-1}(\omega)]$. Then by definition $T_\omega (I_n(\omega))=I_{n-1}(\sigma\omega)$. By induction we have 
$$ 
I_n(\omega) \xrightarrow{ \,\,T_{\omega \,\,\,}} I_{n-1}(\sigma\omega)\xrightarrow{\,T_{\sigma\omega \,\,}}\cdots I_1(\sigma^{n-1}\omega)\xrightarrow{T_{\sigma^{n-1}\omega}}\Lambda.
$$
Hence, every interval $I_n(\omega)$  first is mapped onto $I_1(\omega)$ and then is mapped onto $\Lambda$ by the next iterate of $T_\omega$.
Define a return time $R_\omega:(0, 1]\to \NN$ by setting $R_\omega|_{(X_{n}(\omega), X_{n-1}(\omega)]}=n$. Then the induced full branch map 
$T^{R_\omega}_\omega: (0, 1]\to (0, 1]$  defined as $T^{R_\omega}_\omega|_{I_n(\omega)}=T_\omega^n$, for  $n\ge 1$. By assumptions (A1) and (A2) there exists $\beta>1$ such that $T_{(\omega)}^{R_\omega}>\beta$ for all $\omega\in \Omega$.  In fact, we can choose 
\begin{equation}\label{xbeta}
\beta=\min_{\omega_0\in [\alpha_0, \alpha_1]}\min_{x\in  [x(\omega_0) ,1]}|T_{\omega_0}'(x)|.
\end{equation}
This proves (P1). By (A1) all the maps in $\F$ have two full branches with $x_\alpha<1$. Hence, the interval where $R_\omega=1$ has strictly positive length and thus (P4) is obviously satisfied. 

 To prove the remaining properties we use the  following  proposition, which is proved in section \ref{pof:key}. 
 \begin{proposition}\label{prop:key}
$1)$ For every  $\omega\in \Omega$ the sequence 
$\{X_n(\omega)\}_n$ is decreasing  and \\ $\lim_{n\to \infty}X_n(\omega)=~0$ . Moreover, there exists a constant $C_0>0$ such that for all $\omega\in \Omega$ 
\begin{equation}\label{estm:twosided}
\frac{1}{C_0n^{1/\alpha_0}}\le X_{n}(\omega) \le \frac{C_0}{n^{1/\alpha_1}}.
\end{equation}
 $2)$ There exists $C,u>0$, $v\in(0, 1)$ and a random variable $n_1:\Omega \to \NN$ which is finite for $\PP$-almost every $\omega\in \Omega$ such that  
\begin{align}
\label{eq:tailofn1} \PP\{\omega\mid n_1(\omega) >n\}\le C e^{-un^v},\\
\label{eq:tailofXn}X_{n}(\omega)\le C n^{-1/\alpha_0}(\log n)^{1/\alpha_0}\quad \forall n\ge n_1, \\ 
\label{eq:anntail} \int (X_{n-1}(\omega)-X_{n}(\omega))d\PP(\omega) \le C n^{-1-1/\alpha_0}(\log n)^{1/\alpha_0}.\end{align}
\end{proposition}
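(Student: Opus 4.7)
My plan is to prove both parts via a single telescoping identity for the orbit $\tilde X_k := X_{n-k}(\sigma^k\omega)$, $k = 0,\dots,n$, which by construction satisfies $\tilde X_0 = X_n(\omega)$, $\tilde X_n = 1$ and $T_{\omega_k}(\tilde X_k) = \tilde X_{k+1}$. Monotonicity of $X_n(\omega)$ in $n$ and the convergence $X_n(\omega)\to 0$ will follow by a direct induction from (A2): since $T_{\omega_0}^{-1}$ is increasing, $X_n(\omega) < X_{n-1}(\omega)$ reduces to $X_{n-1}(\sigma\omega) < X_{n-2}(\sigma\omega)$, with base case $X_1(\omega) = x_{\omega_0} < 1$.

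For the two-sided bound \eqref{estm:twosided} I expand, using (A3) for $\tilde X_k \le \eps_0$,
\begin{equation*}
\tilde X_{k+1}^{-\gamma} - \tilde X_k^{-\gamma} = -\gamma\, c_{\omega_k}\, \tilde X_k^{\omega_k - \gamma}\bigl(1 + f_{\omega_k}(\tilde X_k)\bigr) + O\bigl(\tilde X_k^{2\omega_k - \gamma}\bigr),
\end{equation*}
sum from $k=0$ to $n-1$ and use $\tilde X_n = 1$ to obtain
\begin{equation*}
X_n(\omega)^{-\gamma} = 1 + \gamma \sum_{k=0}^{n-1} c_{\omega_k}\, \tilde X_k^{\omega_k - \gamma}\bigl(1+o(1)\bigr).
\end{equation*}
Taking $\gamma = \alpha_1$: since $\omega_k - \alpha_1 \le 0$ and $\tilde X_k \le 1$ force $\tilde X_k^{\omega_k - \alpha_1} \ge 1$, each summand is bounded below by a positive constant, giving $X_n(\omega) \le C_0 n^{-1/\alpha_1}$. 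Taking $\gamma = \alpha_0$: since $\omega_k - \alpha_0 \ge 0$ and $\tilde X_k \le 1$ force $\tilde X_k^{\omega_k - \alpha_0} \le 1$, each summand is bounded above by $\alpha_0 c_{\max}$, giving $X_n(\omega) \ge C_0^{-1} n^{-1/\alpha_0}$. The finitely many indices on which $\tilde X_k > \eps_0$ contribute $O(1)$ and are absorbed into the constants via the uniform expansion on $[\eps_0, x_\alpha]$ coming from (A2) and (A5).

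For Part 2 the key is to sharpen the $\gamma = \alpha_0$ calculation by producing a positive proportion of summands that are bounded below by a fixed positive constant. Setting $\delta_j := \alpha_0 (\log 2)/\log j$ and $G_k := \{\omega_k \le \alpha_0 + \delta_{n-k}\}$, the deterministic lower bound $\tilde X_k \ge C_0^{-1}(n-k)^{-1/\alpha_0}$ from Part 1 yields, on $G_k$,
\begin{equation*}
\tilde X_k^{\omega_k - \alpha_0} \ge C_0^{-(\alpha_1-\alpha_0)}\,(n-k)^{-(\log 2)/\log(n-k)} = \tfrac{1}{2}\,C_0^{-(\alpha_1-\alpha_0)}.
\end{equation*}
Since $\omega_k$ is uniform on $[\alpha_0, \alpha_1]$, the $G_k$ are independent with $\PP(G_k) \asymp 1/\log(n-k)$, so $\mathbb{E}[N_n] \asymp n/\log n$ for $N_n := \sum_{k=0}^{n-1} \mathbf{1}_{G_k}$. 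Hoeffding's inequality applied to sums of independent indicators then gives $\PP(N_n \le \mathbb{E}[N_n]/2) \le \exp\bigl(-cn/(\log n)^2\bigr)$; on the complement $X_n(\omega)^{-\alpha_0} \ge cn/\log n$, hence $X_n(\omega) \le C n^{-1/\alpha_0}(\log n)^{1/\alpha_0}$. Defining $n_1(\omega) := \inf\{n : X_m(\omega) \le C m^{-1/\alpha_0}(\log m)^{1/\alpha_0} \text{ for all } m \ge n\}$ and summing these tail probabilities over $m \ge n$ yields \eqref{eq:tailofn1} and \eqref{eq:tailofXn}, since $n/(\log n)^2 \ge n^v$ holds eventually for any $v \in (0,1)$.

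Finally, for the annealed estimate \eqref{eq:anntail} I exploit stationarity. Since $\omega_0$ is independent of $\sigma\omega$ and $X_{n-1}(\sigma\omega)$ has the same distribution as $X_{n-1}(\omega)$, setting $g(y) := \int T_\alpha^{-1}(y)\,d\eta(\alpha)$ gives $\mathbb{E}[X_{n-1}(\omega) - X_n(\omega)] = \mathbb{E}[X_{n-1}(\omega) - g(X_{n-1}(\omega))]$. By (A3) one has $y - T_\alpha^{-1}(y) = c_\alpha y^{1+\alpha}(1+o(1))$ for $y\to 0$, and the elementary computation $\int_{\alpha_0}^{\alpha_1} y^\alpha\,d\alpha = (y^{\alpha_0}-y^{\alpha_1})/|\log y| \asymp y^{\alpha_0}/|\log y|$ gives $y - g(y) \asymp y^{1+\alpha_0}/|\log y|$ for small $y$. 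Substituting the a.s.\ bound from Part 2 and using \eqref{eq:tailofn1} to absorb the exceptional set $\{n < n_1\}$ then delivers the claimed decay $n^{-1-1/\alpha_0}(\log n)^{1/\alpha_0}$. The main obstacle throughout is the concentration step in Part 2: arranging that a single $n_1(\omega)$ controls $X_m$ for \emph{all} $m \ge n_1$ while still enjoying the stretched-exponential tail, which will require a careful union bound over $m$ in the Hoeffding estimate combined with the elementary inequality relating $n/(\log n)^2$ and $n^v$.
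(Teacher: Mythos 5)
Your proposal is correct and reaches all three conclusions, but it follows a genuinely different route from the paper in Parts 1 and 2. For Part 1 the paper does not use a telescoping identity at all: it sandwiches $X_n(\omega)$ between two \emph{deterministic} sequences $Z_n$ and $Z_n'$ obtained by iterating the envelope maps $G(x)=x(1+C_1 x^{\alpha_0})$ and $G'(x)=x(1+C_1'x^{\alpha_1})$, and then cites Gou\"ezel's asymptotic $Z_n\sim n^{-1/\alpha_0}$; your direct telescoping of $\tilde X_k^{-\gamma}$ with $\gamma\in\{\alpha_0,\alpha_1\}$ is more self-contained and avoids the external reference. For Part 2 the paper also sums the telescoping increments, but then handles the random sum $S_n=\sum_k(C_3 a_k-C_2 b_k)$ head-on: it computes $E_\PP[a_k]$ and $E_\PP[b_k]$ exactly via the exponential-moment integral of Lemma~\ref{lem:expec}, shows $E_\PP[S_n]\sim C_4 n/\log n$, and applies Hoeffding to $S_n$ itself. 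You instead extract the sub-sum of indicator variables $\mathbf 1_{G_k}$ of the events $\{\omega_k\le\alpha_0+\delta_{n-k}\}$ and run Hoeffding on Bernoulli sums; this is a cleaner way to produce the same $n/\log n$ lower bound and $e^{-cn/(\log n)^2}$ deviation rate, trading the explicit moment computation for a probabilistic event argument. Both routes then define $n_1$ by the same infimum and use the same union bound plus the elementary comparison $n/(\log n)^2\ge n^v$.

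For the annealed estimate \eqref{eq:anntail}, your reformulation via $g(y)=\int T_\alpha^{-1}(y)\,d\eta(\alpha)$ and the identity $\int_{\alpha_0}^{\alpha_1}y^\alpha\,d\alpha=(y^{\alpha_0}-y^{\alpha_1})/|\log y|$ is mathematically the same $\alpha$-average the paper performs when it writes $\int(\log n/n)^{(\alpha(\omega)+1)/\alpha_0}d\PP(\omega)$, but you make the extra $1/|\log y|$ gain explicit, which is exactly what is needed to land the claimed $(\log n)^{1/\alpha_0}$ power rather than $(\log n)^{1+1/\alpha_0}$; the paper's last displayed inequality is stated more crudely than the integral it bounds. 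Two small points you should flesh out in a final write-up: (i) the $O(\tilde X_k^{2\omega_k-\gamma})$ error in the telescoping needs to be controlled uniformly for $\tilde X_k\le\eps_0$ (the ``$(1+o(1))$'' should be quantified as ``bounded between fixed constants once $\tilde X_k\le\eps'$''), and the finitely many indices with $\tilde X_k>\eps_0$ must be bounded by an $n_0$ independent of $\omega$, which follows from uniform expansion on $[\eps_0,x_\alpha]$ via (A2), (A5) and compactness; (ii) $\PP(G_k)\asymp1/\log(n-k)$ requires handling the small-$(n-k)$ range where $\delta_{n-k}>\alpha_1-\alpha_0$ and hence $\PP(G_k)=1$, which only helps but should be mentioned. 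Neither affects the validity of the argument.
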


Now we will prove (P3).  For every $\omega\in\Omega$ by definition of $R_\omega$ and inequality \eqref{estm:twosided}  we have 
$$
m\{R_\omega >n\}=X_n(\omega)-x(\omega_0)\le \frac{1}{\beta}X_n(\sigma\omega)\le C_0 n^{-1/\alpha_1}.
$$
Since $\alpha_1<1$ we have $\sum n^{-1/\alpha_1}<+\infty$ and hence, there exists  $M>0$ such that 
$$
\sum_{n\ge 1}m\{R_\omega >n\}\le M.
$$

Inequalities  \eqref{eq:tailofn1} and  \eqref{eq:tailofXn} in Proposition \ref{prop:key} directly imply the inequalities \eqref{tail} in (P3), and \eqref{eq:anntail} implies inequality \eqref{tailann} in (P3). It remains to show distortion estimates (P2) for the induced map. Our proof is based on Koebe principle. Recall that the Schwarzian derivative of a $C^3$ diffeomorphism $g$ is  defined as 
$$
Sg(x)=\frac{g'''(x)}{g'(x)}-\frac{3}{2}\left(\frac{g''(x)}{g'(x)}\right)^2.
$$ 
It can be easily checked that if $f$ and $g$ are two maps  such that $f'\ge 0$, $Sf<0$ and $Sg\le 0$, then 
$S(g\circ f)=(Sg)\circ f\cdot f'+ Sf<0$ i.e.  the composition $g\circ f$ has negative Schwarzian derivative. We will use this observation in the proof of  Lemma \ref{kdist}.

Let $J\subset J'$ be two intervals and let $\tau>0$. $J'$ is called a $\tau$-scaled neighbourhood of $J$ if both components of $J'\setminus J$ have length at least $\tau |J|,$ where $|J|$ denotes the length of $J$. The Koebe principle  \cite[Chapter IV, Theorem 1.2]{dMvS} states that, if $g$ is a diffeomorphism onto its image with $Sg<0$ and $J\subset J'$ are two intervals  such that $g(J')$ contains $\tau$-scaled neighbourhood of $g(J)$ then there exists $\hat K(\tau)$ such that for any $x, y\in J$
\begin{equation}\label{Ktau}
\left|\frac{g'(x)}{g'(y)}-1\right| \le \hat K(\tau)\frac{|x-y|}{|J|}.
\end{equation}
By applying  the mean value theorem twice first in $J$ and then in  $(x, y)\subset J$ for any $x, y\in J$ we obtain 
$$
\frac{|g(x)-g(y)|}{|g(J)|}=\frac{|g'(v)|}{|g'(u)|}\frac{|x-y|}{|J|}
$$
for  some $u\in J$, $v\in (x, y)$.   Now inequality \eqref{Ktau} implies that $|g'(v)|/|g'(u)| \ge (1+\hat K(\tau))^{-1}$. Thus 
\begin{equation}\label{koebe}
\left|\frac{g'(x)}{g'(y)}-1\right| \le K(\tau)\frac{|g(x)-g(y)|}{|g(J)|},
\end{equation}
for $K(\tau)=(1+\hat K(\tau))\hat K(\tau)$.

Recall that by (A4) the left branch of $T_\omega$ has negative Schwarzian derivative for all $\omega\in\Omega$.  This fact will be used in the proof of the following  lemma. 
\begin{lemma}\label{kdist}

There exists $K>0$ such that for all $\omega\in \Omega$,  $n\in \NN$ and for $x, y\in I_{n}(\omega)$ 
$$
\left|\frac{(T^n_\omega)'(x)}{(T^n_\omega)'(y)}-1\right| \le K |T^n_\omega(x)-T^n_\omega(y)|.
$$
\end{lemma}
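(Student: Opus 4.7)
The plan is to decompose the induced map as $T^n_\omega|_{I_n(\omega)} = T_{\sigma^{n-1}\omega} \circ T^{n-1}_\omega$, exploiting that on $I_n(\omega)$ the first $n-1$ iterates use only left branches (sending $I_n(\omega)$ diffeomorphically onto $I_1(\sigma^{n-1}\omega) = (x_{\alpha(\sigma^{n-1}\omega)}, 1]$), while the final iterate $T_{\sigma^{n-1}\omega}$ is the right branch mapping $I_1(\sigma^{n-1}\omega)$ onto $\Lambda$. Assumption (A4) together with the composition rule for the Schwarzian derivative recorded in the excerpt shows both $T^{n-1}_\omega|_{I_n(\omega)}$ and $T^n_\omega|_{I_n(\omega)}$ have negative Schwarzian. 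I would then apply Koebe to the all-left-branch part and handle the right-branch factor by a direct mean-value estimate.

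For the Koebe step I would extend $I_n(\omega)$ to $\hat I_n(\omega) := (0, X_{n-1}(\omega)] = \bigsqcup_{k \ge n} I_k(\omega)$; by the defining recursion for $X_k$, the composition $T^{n-1}_\omega$ extends to a $C^3$ diffeomorphism of $\hat I_n(\omega)$ onto $(0, 1]$ via left branches only, still with negative Schwarzian. The image of $\hat I_n(\omega) \setminus I_n(\omega)$ is $(0, x_{\alpha(\sigma^{n-1}\omega)}]$, and since $\alpha \mapsto x_\alpha$ is continuous on the compact interval $[\alpha_0, \alpha_1]$ with values in $(0, 1)$, there exist $0 < x_{\min} \le x_{\max} < 1$ bounding $x_{\alpha(\sigma^{n-1}\omega)}$ uniformly in $\omega$. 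This yields a definite $\tau_0$-scaled extension of $T^{n-1}_\omega(I_n(\omega))$ inside $(0, 1]$ with $\tau_0$ independent of $\omega$ and $n$, and feeding it into \eqref{koebe} gives
$\left|(T^{n-1}_\omega)'(x)/(T^{n-1}_\omega)'(y) - 1\right| \le K_1 |T^{n-1}_\omega(x) - T^{n-1}_\omega(y)|$
for $x, y \in I_n(\omega)$ with uniform $K_1$.

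For the last iterate, $T_{\sigma^{n-1}\omega} : [x_{\alpha(\sigma^{n-1}\omega)}, 1] \to [0, 1]$ has $T' \ge \beta > 1$ by \eqref{xbeta} and $|T''|$ uniformly bounded by (A5) and compactness, so a mean-value estimate yields $|T_{\sigma^{n-1}\omega}'(u)/T_{\sigma^{n-1}\omega}'(v) - 1| \le K_2 |T_{\sigma^{n-1}\omega}(u) - T_{\sigma^{n-1}\omega}(v)|$ with uniform $K_2$. I would then combine via the chain rule $(T^n_\omega)' = (T_{\sigma^{n-1}\omega}' \circ T^{n-1}_\omega) \cdot (T^{n-1}_\omega)'$ together with the contraction $|T^{n-1}_\omega(x) - T^{n-1}_\omega(y)| \le \beta^{-1}|T^n_\omega(x) - T^n_\omega(y)|$ coming from the right branch of $T_{\sigma^{n-1}\omega}$, obtaining the lemma with a uniform constant $K$.

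The main obstacle is that the scaled neighborhood provided by the extension is only one-sided: $T^{n-1}_\omega(I_n(\omega))$ sits at the right endpoint of $(0, 1]$, so Koebe in the two-sided form of the excerpt does not apply verbatim. I would resolve this by invoking (or re-deriving) the boundary version of Koebe for negative-Schwarzian maps (see, e.g., de Melo--van Strien, Chapter IV), in which the scaled hypothesis is required on only one side when the interval of interest abuts the image boundary; the Koebe constant then depends only on the one-sided $\tau_0$. All constants remain uniform in $\omega$ and $n$ thanks to the compactness of $[\alpha_0, \alpha_1]$ together with (A4)--(A5).
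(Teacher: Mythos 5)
Your decomposition $T^n_\omega|_{I_n(\omega)} = T_{\sigma^{n-1}\omega} \circ T^{n-1}_\omega$ and the overall plan (Koebe for the left-branch block, a mean-value estimate for the final right branch, combine via chain rule and the $\beta$-contraction) is exactly the paper's strategy, and the identification of the uniform left margin $x_{\min} \le x_{\alpha} \le x_{\max}$ coming from (A1) and compactness is also correct. The point of divergence, and where the gap lies, is how the absence of a \emph{right} margin is handled.

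The Koebe principle quoted in the paper as \eqref{koebe}, and as stated in de Melo--van Strien Chapter~IV, is a genuinely two-sided statement: both components of $g(J')\setminus g(J)$ must have length $\ge \tau |g(J)|$. You set $\hat I_n(\omega)=(0,X_{n-1}(\omega)]$, so $J=I_n(\omega)$ shares its right endpoint with the ambient interval and the image $g(\hat I_n(\omega))=(0,1]$ has no room to the right of $g(J)=[x_{\alpha(\sigma^{n-1}\omega)},1]$. You acknowledge this and assert a ``boundary version'' of Koebe requiring only a one-sided scaled neighbourhood, but no such theorem appears in the cited source in the form you need, and it is not something you can invoke without proof: the standard cross-ratio machinery that underlies Koebe degenerates when one of the flanking intervals is empty. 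The paper avoids this problem entirely by \emph{analytically extending} each left branch beyond its cut point $x_\alpha$ to all of $(0,+\infty)$ while keeping the Schwarzian derivative non-positive (see the footnote giving an explicit quartic extension), and then applying the two-sided Koebe principle with $J'=[X_{n+1}(\omega),2]$, so that $g(J')$ strictly contains a genuine two-sided $\tau$-scaled neighbourhood of $g(J)$. That extension step is the missing idea: without it (or without an actual proof of a one-sided Koebe estimate with the Lipschitz form $|g'(x)/g'(y)-1|\le K|g(x)-g(y)|$ and a uniform $K$), the argument is incomplete. Either re-derive and prove the one-sided inequality you need from the negative-Schwarzian structure, or, more simply, adopt the paper's analytic extension device.

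A secondary remark: the paper's proof actually establishes the distortion inequality for $g=T^{n-1}_\omega$ (the all-left-branch block) rather than for $T^n_\omega$ itself; it is Lemma~\ref{d1} that assembles the final inequality for $T^{R_\omega}_\omega$ from two applications of Lemma~\ref{kdist} (one to $g$, one to the single right branch). Your write-up folds these two steps together, which is fine organizationally, but be aware the published statement and its proof are slightly offset by one iterate.
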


\begin{proof} Notice,  that  $M=\max_{\omega_0\in[\alpha_0, \alpha_1]}\max_{x\in I_1( \omega)}|T''_\omega(x)|<+\infty$  by (A5). Also, recall that  $T_\alpha'|_{I_\omega}>\beta>1$ for any $T_\alpha\in \F$. Thus for $n=1$, we have 
$$
\left|\frac{(T_\omega)'(x)}{(T_\omega)'(y)}-1\right| \le \frac{1}{\beta}|(T_\omega)'(x)-(T_\omega)'(y)|\le \frac{M}{\beta^2}|T_\omega(x)-T_\omega(y)|.
$$
For $n\ge 2$ we use Koebe principle mentioned above.   Set $J=[X_{n}(\omega), X_{n-1}(\omega)]$ and $J'=[X_{n+1}(\omega), 2]$. We  first extend $T_{\omega_{n-2}}$, $\dots$, $T_{\omega_0}$ to $(0, +\infty)$ analytically, keeping the Schwarzian derivative non-positive\footnote{Such extensions can be constructed easily. For example, for $f\in \F$ it is sufficient to take $\tilde f(x)=a(x-x_\alpha)^4+b(x-x_\alpha)^3+c(x-x_\alpha)^2+d(x-x_\alpha)+1$ with $a<bc/d$, where $a,b, c$ are the Taylor coefficients of $f$ at $x=x_\alpha$.}. Let $g=T_{\omega_{n-2}}\circ \dots \circ T_{\omega_0}$. Then, $g$ has negative Schwarzian derivative. We will show that $g(J')$ contains  $\tau$ scaled neighbourhood of $g(J)$ for some $\tau>0$, which is independent of $\omega$.  Since $g(X_{n}(\omega))=X_1(\sigma^{n-1}\omega)$ and $g(X_{n+1}(\omega))=X_2(\sigma^{n-1}\omega)$.  It is sufficient to show that $X_{1}(\omega)-X_{2}(\omega)$ is bounded below by a constant independent of $\omega$. By definition of $X_n$ we have 
$$
|X_{1}(\omega)-X_{2}(\omega)|= |T_\omega^{-1}(1)-T_\omega^{-1}\circ T_{\sigma(\omega)}^{-1}(1)| \ge \frac{1}{\beta'}|1-T_{\sigma(\omega)}^{-1}(1)|\ge\kappa>0,
$$
where $\beta'=\min\{T_\omega'(x)\mid (x, \omega_0) \in [ \tilde X, 1]\times[\alpha_0, \alpha_1] \}> 1$ with $\tilde X=\min_{\omega} X_2(\omega)$ and 
$\kappa=\beta' (1-\min_{\alpha}x_\alpha)>0$ by  (A1).
Thus, using the fact $|g(J)|>1- \max_\alpha x_\alpha >0$  from \eqref{koebe} we obtain 
\begin{equation*}
\left|\frac{g'(x)}{g'(y)}-1\right| \le K{|g(x)-g(y)|}.
\end{equation*}
with $K={K(\tau)}/{1- \max_\alpha x_\alpha}$ which finished the proof. 
\end{proof}

\begin{lemma}\label{d1}
There exists a constant $C>0$ independent of $\omega$ such that for all $\omega\in \Omega$ and for any $x, y\in I_n(\omega)$
$$
\left|\log\frac{(T_\omega^{R_\omega})'(x)}{(T_\omega^{R_\omega})'(y)}\right| \le C|T_\omega^{R_\omega}(x)-T_\omega^{R_\omega}(y)|.
$$
\end{lemma}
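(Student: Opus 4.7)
The plan is to derive the logarithmic distortion bound directly from the multiplicative bound of Lemma \ref{kdist} via a standard two-step comparison argument.

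First, I would recall the elementary analytic inequality $|\log(1+z)|\le 2|z|$ valid for $|z|\le 1/2$. Combined with Lemma \ref{kdist}, this already settles the case where $K|T^n_\omega(x)-T^n_\omega(y)|\le 1/2$: the ratio $(T^n_\omega)'(x)/(T^n_\omega)'(y)$ lies in $[1/2,3/2]$, so
$$
\left|\log\frac{(T^n_\omega)'(x)}{(T^n_\omega)'(y)}\right|\le 2\left|\frac{(T^n_\omega)'(x)}{(T^n_\omega)'(y)}-1\right|\le 2K\,|T^n_\omega(x)-T^n_\omega(y)|,
$$
giving the conclusion with $C=2K$ in this regime.

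Second, for the general case (images possibly of order $1$ apart, since they lie in $\Lambda=(0,1]$), I would use a chaining argument. Because $T^n_\omega\colon I_n(\omega)\to\Lambda$ is a diffeomorphism, for any $x,y\in I_n(\omega)$ I can choose a finite sequence $y=z_0,z_1,\dots,z_k=x$ in $I_n(\omega)$ whose images $z_i':=T^n_\omega(z_i)$ are monotone and satisfy $|z_{i+1}'-z_i'|\le 1/(2K)$ and $\sum_{i=0}^{k-1}|z_{i+1}'-z_i'|=|T^n_\omega(x)-T^n_\omega(y)|$. Applying the previous estimate to each consecutive pair $z_i,z_{i+1}$ and telescoping,
$$
\left|\log\frac{(T^n_\omega)'(x)}{(T^n_\omega)'(y)}\right|\le \sum_{i=0}^{k-1}\left|\log\frac{(T^n_\omega)'(z_{i+1})}{(T^n_\omega)'(z_i)}\right|\le 2K\sum_{i=0}^{k-1}|z_{i+1}'-z_i'|=2K\,|T^n_\omega(x)-T^n_\omega(y)|.
$$
Since $R_\omega$ is constant on $I_n(\omega)$, $T_\omega^{R_\omega}|_{I_n(\omega)}=T_\omega^n|_{I_n(\omega)}$, so the bound transfers with the same constant $C=2K$, which is independent of $\omega$ and $n$.

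I do not foresee a genuine obstacle: the only mildly delicate point is that the multiplicative bound in Lemma \ref{kdist} alone does not a priori keep the ratio bounded away from $0$, which is why the small-distance reduction and the monotone chaining are required. Both are available for free because $T^n_\omega$ is an increasing diffeomorphism of $I_n(\omega)$ onto $\Lambda$ and the constant $K$ from Lemma \ref{kdist} is uniform in $\omega$ and $n$.
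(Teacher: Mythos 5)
Your proof is correct, but it takes a genuinely different route from the paper's. The paper re-enters the structure underlying Lemma~\ref{kdist}: it writes $T^{R}_\omega = T_{\sigma^{R-1}\omega}\circ g$ with $g=T^{R-1}_\omega$ the composition of left branches, applies the chain rule to split $\log\bigl((T^{R}_\omega)'(x)/(T^{R}_\omega)'(y)\bigr)$ into the contribution of the single right-branch map (controlled by the uniform $C^2$ bound, i.e.\ the $n=1$ case of Lemma~\ref{kdist}) and the contribution of $g$ (controlled by the Koebe estimate), then uses the uniform expansion $\beta$ of the last iterate to convert $|g(x)-g(y)|$ into $\beta^{-1}|T^R(x)-T^R(y)|$. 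You instead use Lemma~\ref{kdist} as a black box multiplicative bound on the \emph{full} return map and upgrade it to a logarithmic bound by chaining: subdivide so each step has image gap $\le 1/(2K)$, apply $|\log(1+z)|\le 2|z|$ on each step, and telescope, using monotonicity so the image gaps add up exactly to $|T^R_\omega(x)-T^R_\omega(y)|$. Both are valid. Your version is more modular and in some ways cleaner, since the paper's proof tacitly requires a passage from $|t-1|\le K|\cdot|$ to $|\log t|\le C|\cdot|$ which is only justified once one notes (by symmetry in $x,y$) that the relevant ratios are bounded away from $0$ and $\infty$; your small-step reduction makes this quantitative and explicit. On the other hand, the paper's decomposition is useful elsewhere (it is essentially the same decomposition used to prove Lemma~\ref{kdist}), so the two arguments are not redundant in the larger scheme. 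One small caveat: your argument relies on Lemma~\ref{kdist} holding uniformly for all $x,y\in I_n(\omega)$ with a $K$ independent of $n$ and $\omega$, not merely for nearby points; that is indeed what Lemma~\ref{kdist} asserts, so the chaining is legitimate.
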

\begin{proof} From now on we suppress the $\omega$ in $R_\omega$, since no confusion arises. 
Note that $T^{R}_{\omega}(x)$ is the composition of the right branch of $T_{\sigma^{R-1}\omega}$ and $g$ i.e. $T^{R}_{\omega}(x)=T_{\sigma^{R-1}\omega}\circ g(x)$. Therefore, by  definition of $\beta$ in \eqref{xbeta} by  Lemma \ref{kdist} we have 
$$
\begin{aligned}
\log\left| \frac{(T^{R}_{\omega})'(x)}{(T^{R}_{\omega})'(y)}\right|\le K|T^R(x)-T^R(y)|+K|g(x)-g(y)| \le K(1+\frac{1}{\beta})|T^R(x)-T^R(y)|.
\end{aligned}
$$
\end{proof}
Now, we will prove (P2). Together with  an elementary inequality $|x-1| \le C|\log(x)|$ (for some $C>0$, whenever  $|\log x|$ is bounded above) Lemma \ref{d1} implies that for  any $x, y\in I_n(\omega)$ we have 
$$
\left|\frac{(T^{R}_\omega)'x}{(T^{R}_\omega)'y}-1\right|\le  D(K, \beta)|T^R(x)-T^R(y)| \le \mathcal D\beta^{-s(T_\omega^R(x), T_\omega^R(y))},
$$
where $\mathcal D=D(K, \beta)$ is a constant that depends only on $K$ and $\beta$ and the last inequality follows from the observation:  if $x, y\in (0, 1]$ are such that $s(x, y)=n$ then $|x-y|\le \beta^{-n}$. Indeed, by definition $(T_\omega^{R})^{i}(x)$  and $(T_\omega^{R})^{i}(y)$ belong to the same  element of the  partition $\{I_{k}(\omega)\}$ for  all $i=0, ..., n-1$. Thus by the mean value theorem 
$$|x-y|= |[(T_\omega^{R})^{n}]'(\xi)|^{-1}|(T_\omega^{R})^{n}(x) - (T_\omega^{R})^{n}(y)|\le \beta^{-n}.$$ 

\section{Proof of Proposition \ref{prop:key}}\label{pof:key}
 We start by proving  an  auxiliary lemma, which is used in the proof.  
\begin{lemma}\label{lem:expec}
For any $k\in \NN$, $c\ge 1$ and $t>0$ we have 
$$
E_{\PP}[e^{-(c\alpha(\sigma^{k}\omega)-\alpha_0)t}]=\frac{1}{\alpha_1-\alpha_0}\frac{e^{\alpha_0t(1-c)}}{ct}(1-e^{-ct(\alpha_1-\alpha_0)}).
$$
\end{lemma}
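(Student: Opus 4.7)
The statement is a direct calculation that exploits two facts: first, the marginal of $\PP = \eta^{\ZZ}$ in any single coordinate is $\eta$, namely the normalized Lebesgue measure on $[\alpha_0,\alpha_1]$; second, $\alpha(\sigma^k\omega) = \omega_k$ depends only on the $k$-th coordinate. The plan is therefore to reduce the integral over $\Omega$ to a one-dimensional integral over $[\alpha_0,\alpha_1]$ and then evaluate it explicitly.

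Concretely, I would first observe
$$
E_{\PP}[e^{-(c\alpha(\sigma^{k}\omega)-\alpha_0)t}]
= \frac{1}{\alpha_1-\alpha_0}\int_{\alpha_0}^{\alpha_1} e^{-(c\alpha-\alpha_0)t}\, d\alpha,
$$
using that $\omega_k$ has distribution $\eta$. Next I would split the exponent as $-(c\alpha-\alpha_0)t = \alpha_0 t - c t \alpha$, pull out the constant factor $e^{\alpha_0 t}$, and compute
$$
\int_{\alpha_0}^{\alpha_1} e^{-ct\alpha}\, d\alpha = \frac{e^{-ct\alpha_0} - e^{-ct\alpha_1}}{ct}
= \frac{e^{-ct\alpha_0}}{ct}\bigl(1 - e^{-ct(\alpha_1-\alpha_0)}\bigr).
$$
Recombining, the prefactor becomes $e^{\alpha_0 t}\cdot e^{-c t\alpha_0} = e^{\alpha_0 t(1-c)}$, which yields exactly the stated formula. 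The hypothesis $t>0$ ensures $ct\neq 0$ so that the division is legitimate, and $c\ge 1$ is not actually needed for the computation (it will matter only later, when the lemma is applied).

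There is essentially no obstacle here; the proof is a one-line Fubini-style reduction followed by evaluating an elementary exponential integral. The only thing to be careful about is bookkeeping of signs in the exponent so that the factor $e^{\alpha_0 t(1-c)}$ emerges cleanly rather than, say, $e^{-\alpha_0 t(c-1)}$ with an accidental sign error.
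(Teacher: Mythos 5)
Your proposal is correct and follows essentially the same route as the paper: reduce to a one-dimensional integral over $[\alpha_0,\alpha_1]$ (the paper phrases this via $\sigma$-invariance of $\PP$, you phrase it via the marginal of $\omega_k$ being $\eta$, but these are the same observation about the product measure) and then evaluate the elementary exponential integral. The algebra checks out, including the sign bookkeeping producing $e^{\alpha_0 t(1-c)}$, and your remark that $c\ge 1$ is not needed for this computation is also accurate.
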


\begin{proof}
Since $\sigma$ preserves $\PP$ we have 
\begin{align*}
&E_{\PP}[e^{-(c\alpha(\sigma^{k}\omega)-\alpha_0)t}]=E_{\PP}[e^{-(c\alpha(\omega)-\alpha_0)t}] \\
&=\frac{1}{\alpha_1-\alpha_0}\int_{\alpha_0}^{\alpha_1}e^{-(cx-\alpha_0)t}dx
%=\frac{1}{\alpha_1-\alpha_0}\frac{1}{ct}(e^{-c\alpha_0t+\alpha_0t}-e^{-c\alpha_1t+\alpha_0t})\\
=\frac{1}{\alpha_1-\alpha_0}\frac{e^{\alpha_0t(1-c)}}{ct}(1-e^{-ct(\alpha_1-\alpha_0)}).
\end{align*}
\end{proof}

\begin{proof} Now we are ready to prove Proposition 1. First we prove item 1).
The first two assertions are  obvious, since $T'(x)>1$ for $x>0$ and $x=0$ is the unique fixed point in $[0, 1/2]$.  Since all the maps in $\F$ are uniformly expanding except at $0$,  there exists $n_0\in \NN$ independent of $\omega$ such that $X_n(\omega)\in (0, \eps_0)$ for all $n\ge n_0$. Thus, it is sufficient to prove inequality \eqref{estm:twosided} for any $n\ge n_0$.  We now define a sequence 
$\{Z_n\}_n$ which bounds $X_n(\omega)$ from below and has desired asymptotic.   Let $K_0=[0, \eps_0]\times [\alpha_0, \alpha_1]$ and  $C_1=\max_{(x, \alpha)\in K_0}c_\alpha(1+f_\alpha(x))$. Set $G(x)=x(1+C_1x^{\alpha_0})$.  Define $\{Z_n\}_{n\ge n_0}$ as follows:  $Z_{n_0}=\min_{\omega\in \Omega}X_{n_0}(\omega)$ and let $Z_{n}=(G|_{[0, \eps_0]})^{-1}(Z_{n-1})$  for $n >n_0$. Since $G(x)\ge T_{\alpha(\omega)}(x)$ for any $x\in [0, \eps_0]$ and for any $\omega\in \Omega$, one can easily verify by induction that $Z_n\le X_n(\omega)$ for $n\ge n_0$. Finally note that $Z_n\sim n^{-1/\alpha_0}$ \cite{G06}. Defining $C_1'=\min_{(x, \alpha)\in K_0}c_\alpha(1+f_\alpha(x))$,  $G'(x)=x(1+C_1'x^{\alpha_1})$, $Z_{n_0}'=\max_{\omega\in \Omega}X_{n_0}(\omega)$ and  $Z_{n}'=(G'|_{[0, \eps_0]})^{-1}(Z_{n-1}')$  for $n >n_0$ we obtain a sequence $\{Z_n'\}$ such that $X_n(\omega)\le Z_n'$ and $Z_n'\sim  n^{-1/\alpha_1}$.
This finishes the proof. 
\end{proof}

Item 2) is proved below.  Note that by the choice of $n_0$ for any $n\ge n_0$ we have 
\begin{equation}\label{eq:Xnreprs}
X_{n}(\sigma\omega)=X_{n+1}(\omega)[1+c_{\alpha(\omega)}X_{n+1}(\omega)^{\alpha(\omega)}(1+f_{\alpha(\omega)}\circ X_{n+1}(\omega))].
\end{equation}
The latter equality together with the standard estimate $(1+x)^{-a}\le 1-ax+\frac{a(a+1)}{2}x^2$ for $x, a >0$ implies that 
$$
\frac{1}{X_{n+1}(\omega)^{\alpha_0}}-\frac{1}{X_{n}(\sigma\omega)^{\alpha_0}} \ge C_1\alpha_0 X_{n+1}(\omega)^{\alpha(\omega)-\alpha_0}- C_2 X_{n+1}(\omega)^{2\alpha(\omega)-\alpha_0},
$$
where , $C_2=\frac{\alpha_0(\alpha_0+1)}{2}\min_{(\alpha, x)\in K_0}[c_\alpha(1+f_{\alpha}(x))]^2$. Hence,
$$
\frac{1}{X_{n}(\omega)^{\alpha_0}}\ge\frac{1}{x_{\alpha(\omega)}^{\alpha_0}}+C_1\alpha_0\sum_{k=2}^{n} X_{k}(\omega)^{\alpha(\sigma^{n-k}\omega)-\alpha_0}- C_2 \sum_{k=2}^{n}X_{k}(\omega)^{2\alpha(\sigma^{n-k}\omega)-\alpha_0},
$$
Notice that we can take $C_1$ and $C_2$ independent of $\omega$. Therefore, by inequality \eqref{estm:twosided}  we have 
\begin{equation}\label{Xnw}
\frac{1}{X_{n}(\omega)^{\alpha_0}}\ge
1+C_3\sum_{k=2}^{n} (k^{1/\alpha_0})^{\alpha_0-\alpha(\sigma^{n-k}\omega)}- C_2 \sum_{k=2}^{n}(k^{1/\alpha_1})^{(-2\alpha(\sigma^{n-k}\omega)+\alpha_0)},
\end{equation}
First we will show that  the  right hand side of the latter inequality  on average behaves like $n^{-1}\log n$ as $n$ goes to infinity. 
We set 
$$a_k:= (k^{1/\alpha_0})^{\alpha_0-\alpha(\sigma^{n-k}\omega)}, \quad b_k=(k^{1/\alpha_1})^{-2\alpha(\sigma^{n-k}\omega)+\alpha_0}$$ 
and 
$$S_n=\sum_{k=2}^{n}C_3a_k-C_2b_k.$$

\begin{lemma} \label{4} There exists $C_4>0$ such that $\displaystyle\lim_{n\to \infty}\frac{\log n }{n}E_{\PP}(S_n)=C_4.$
\end{lemma}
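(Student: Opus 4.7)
The plan is to compute $E_\PP[a_k]$ and $E_\PP[b_k]$ in closed form via Lemma \ref{lem:expec}, verify that the $a_k$-term dominates, and then invoke the elementary asymptotic $\sum_{k=2}^n 1/\log k \sim n/\log n$ to read off the constant $C_4$.

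First, I would rewrite the summands in the form required by Lemma \ref{lem:expec}:
\[
a_k = \exp\!\Bigl(-(\alpha(\sigma^{n-k}\omega)-\alpha_0)\tfrac{\log k}{\alpha_0}\Bigr), \qquad b_k = \exp\!\Bigl(-(2\alpha(\sigma^{n-k}\omega)-\alpha_0)\tfrac{\log k}{\alpha_1}\Bigr),
\]
so that $a_k$ fits with $(c,t)=(1,\log k/\alpha_0)$ and $b_k$ with $(c,t)=(2,\log k/\alpha_1)$. Since $\PP$ is $\sigma$-invariant, Lemma \ref{lem:expec} applies to both expectations and a direct substitution yields
\[
E_\PP[a_k] = \frac{\alpha_0\bigl(1 - k^{-(\alpha_1-\alpha_0)/\alpha_0}\bigr)}{(\alpha_1-\alpha_0)\log k}, \qquad E_\PP[b_k] = \frac{\alpha_1 k^{-\alpha_0/\alpha_1}\bigl(1 - k^{-2(\alpha_1-\alpha_0)/\alpha_1}\bigr)}{2(\alpha_1-\alpha_0)\log k},
\]
so in particular $E_\PP[a_k] \sim \alpha_0/((\alpha_1-\alpha_0)\log k)$ while $E_\PP[b_k] = O(k^{-\alpha_0/\alpha_1}/\log k)$.

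Next I would sum over $k$. One integration by parts gives $\int_2^n dx/\log x \sim n/\log n$, hence $\sum_{k=2}^n E_\PP[a_k] \sim \frac{\alpha_0}{\alpha_1-\alpha_0}\cdot\frac{n}{\log n}$. Since $\alpha_0>0$ we have $1-\alpha_0/\alpha_1<1$, which forces $\sum_{k=2}^n E_\PP[b_k] = O(n^{1-\alpha_0/\alpha_1}/\log n) = o(n/\log n)$. Combining the two estimates,
\[
\frac{\log n}{n}\,E_\PP[S_n] = \frac{\log n}{n}\Bigl(C_3\sum_{k=2}^n E_\PP[a_k] - C_2 \sum_{k=2}^n E_\PP[b_k]\Bigr) \longrightarrow \frac{C_3\,\alpha_0}{\alpha_1-\alpha_0} =: C_4 > 0,
\]
which is the desired statement.

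The whole argument is essentially bookkeeping: Lemma \ref{lem:expec} linearises the expectations, stationarity of $\PP$ under $\sigma$ erases the shift index in $\sigma^{n-k}\omega$ (so the summands are iid and one only computes a single one-parameter integral), and the rest reduces to the elementary integral asymptotic $\sum_{k=2}^n 1/\log k \sim n/\log n$. I do not foresee a substantive obstacle; the only mild subtlety is confirming that the $b_k$-sum is genuinely of lower order, which uses only $0<\alpha_0<\alpha_1$.
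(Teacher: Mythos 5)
Your proposal is correct and follows essentially the same route as the paper: rewrite $a_k$ and $b_k$ in exponential form, apply Lemma \ref{lem:expec} (with $(c,t)=(1,\log k/\alpha_0)$ and $(2,\log k/\alpha_1)$ respectively, using $\sigma$-invariance to drop the shift index), then sum using $\sum_{k\le n}1/\log k\sim n/\log n$ and observe the $b_k$-sum is of lower order, yielding $C_4 = C_3\alpha_0/(\alpha_1-\alpha_0)$. Minor note: the paper's text mistakenly says the $a_k$ substitution uses $\log k^{1/\alpha_1}$, but the displayed formula agrees with your (correct) choice $t=\log k^{1/\alpha_0}$.
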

\begin{proof}
Applying the above lemma to $E_{\PP}(e^{\log a_k})$ with $c=1$, $u=\log k^{1/\alpha_1}$ and using the fact $\sum_{k\le n}\frac{1}{\log k}\sim \frac{n}{\log n}$ we obtain 
$$
\sum_{k=2}^{n}E_{\PP}(a_k)=\frac{\alpha_0}{\alpha_1-\alpha_0}\sum_{k=2}^n\frac{1}{\log k}(1-k^{-\frac{\alpha_1-\alpha_0}{\alpha_0}})= \frac{\alpha_0}{\alpha_1-\alpha_0}\frac{n}{\log n}+O(n^{1-\frac{\alpha_1-\alpha_0}{\alpha_0}}(\log n)^{-1})
$$
and hence, 
\begin{equation}\label{Ea_k}
\frac{\log n }{n}\sum_{k=2}^{n}E_{\PP}(a_k) = \frac{\alpha_0}{\alpha_1-\alpha_0}+O(n^{-\frac{\alpha_1-\alpha_0}{\alpha_0}}).
\end{equation}
Similarly, applying Lemma \ref{lem:expec} to $E_{\PP}(b_k)$ with $c=2$ and $t=\log k^{1/\alpha_1}$,  we obtain 
$$
\sum_{k=2}^{n}E_{\PP}(b_k):=\frac{\alpha_1}{2(\alpha_1-\alpha_0)}\sum_{k=2}^{n}\frac{1}{\log k}(k^{-\frac{\alpha_0}{\alpha_1}} - k^{\frac{\alpha_0}{\alpha_1}-2})=\frac{\alpha_1}{2(\alpha_1-\alpha_0)}\frac{n^{1-{\alpha_0}/{\alpha_1}}}{\log n}+ o(n).
$$
and hence, 
\begin{equation}\label{Eb_k}
\lim_{n\to\infty}\frac{\log n }{n}\sum_{k=2}^{n}E_{\PP}(b_k)=\lim_{n\to\infty} n^{-\alpha_0/\alpha_1}=0.
\end{equation}
Combining \eqref{Ea_k} and \eqref{Eb_k} implies 
$$
\lim_{n\to \infty}\frac{\log n }{n}E_{\PP}(S_n)=\lim_{n\to \infty}\frac{\log n }{n}\sum_{k=2}^{n}E_{\PP}(C_3a_k-C_2b_k) =C_4,
$$
where $C_4= {C_3\alpha_0}/(\alpha_1-\alpha_0)$. 
\end{proof}

Now we construct a random variable $n_1:\Omega\to \NN$ as in item 2) of Proposition \ref{prop:key}. 
Lemma \ref{4} implies that  there exists $N$ independent of $\omega$ such that 
\begin{equation}\label{Eak-Ebk}
\frac{C_4}{2}\le\frac{\log n }{n}E_{\PP}(S_n) \le\frac{3C_4}{2}
\end{equation}
for all $n\ge N$.
On the other hand, by \cite[Theorem 1]{Hoe}, there exists $C>0$ such that for every $t>0$ and $n\in \NN$ we have 
$$
\PP\left\{\frac{\log n}{n}|S_{n+1}-E_{\PP}(S_{n+1})|< t\right\}\le e^{-\frac{Cnt^2}{(\log n)^2}} .
$$
Thus, by letting $C_5=CC_4^2/16$ we obtain 
\begin{equation}\label{pp}
\PP\left\{\frac{\log n}{n}S_{n+1}< \frac{C_4}{4}\right\}\le 
\PP\left\{\frac{\log n}{n}(S_{n+1}-E_{\PP}S_{n+1})<  -\frac{C_4}{4}\right\}\le e^{-\frac{C_5n}{(\log n)^2}}.
\end{equation}
Define 
$$n_1(\omega)=\inf\{n\ge N\mid \forall k\ge n, \frac{\log k}{k}S_k\ge \frac{C_4}{4}\}.$$ 
Inequality \eqref{pp}  implies that 
\begin{equation*}\label{eq:n1final}
\PP\{n_1(\omega)>n\}\le \sum_{k=n}^{\infty}e^{-\frac{C_5k}{(\log k)^2}}\le C_6\sum_{k=n}^{\infty}e^{-uk^v}\le Ce^{-un^v}
\end{equation*}
for some $C>0$, $u>0$ and $v\in(0, 1)$ which proves inequality \eqref{eq:tailofn1}.

For any $n\ge n_1$ by \eqref{Xnw} we have 
$$
X_n(\omega)^{\alpha_0} \le \frac{\log n}{n}\frac{4}{C_4}.
$$
Hence, for some positive $C>0$ we have 
$$
X_n(\omega)\le C \left (\frac{\log n}{n}\right)^{1/\alpha_0}.
$$
This finishes the proof of  \eqref{eq:tailofXn}. It remains to prove \eqref{eq:anntail}.  Recall that there exists $n_0$ which depends only on $\eps_0$ in (A3) such that \eqref{eq:Xnreprs} holds for all $n\ge n_0$. Thus, recalling that  $\sigma$ preserves $\PP$ we  have 
$$
\begin{aligned}
\int m\{R_\omega=n\}d\PP(\omega)\le \frac{1}{\beta}\int (X_{n-1}(\sigma\omega)-X_n(\sigma\omega))d\PP(\omega)= \\
  \frac{1}{\beta}\int (X_{n-1}(\sigma\omega)-X_n(\omega))d\PP(\omega)=\\
\int_{\{n_1(\omega)>n\}} (X_{n-1}(\sigma\omega)-X_n(\omega))d\PP(\omega) +\int_{\{n_1(\omega)\le n\}} (X_{n-1}(\sigma\omega)-X_n(\omega))d\PP(\omega)
\\
\le Ce^{-un^v}+\int_{\{n_1(\omega)\le n\}} c_{\alpha(\omega)}X_{n}(\omega)^{\alpha(\omega)+1}(1+f_{\alpha(\omega)}\circ X_{n}(\omega)d\PP(\omega)\\
\le Ce^{-un^v}+ C\int\left(\frac{\log n}{n}\right)^{(\alpha(\omega)+1)/\alpha_0}d\PP(\omega) \le C\left(\frac{\log n}{n}\right)^{(\alpha_0+1)/\alpha_0}.
\end{aligned}
$$
This finishes the proof for all $n\ge n_0$. For $n<n_0$ the assertion follows by increasing the constant $C$ if necessary.

\begin{acknowledgement}
This research was supported by The Leverhulme Trust through the research grant RPG-2015-346. The author would like to thank Wael Bahsoun for useful discussions during the preparation of the paper. 
\end{acknowledgement}

%%%%%%%%%%%%%%%%%%%%%%%% referenc.tex %%%%%%%%%%%%%%%%%%%%%%%%%%%%%%
% sample references
% %
% Use this file as a template for your own input.
%
%%%%%%%%%%%%%%%%%%%%%%%% Springer-Verlag %%%%%%%%%%%%%%%%%%%%%%%%%%
%
% BibTeX users please use
% \bibliographystyle{}
% \bibliography{}
%

% Use the following (APS) syntax and markup for your references if 
% the subject of your book is from the field 
%% "Mathematics, Physics, Statistics, Computer Science"
%%
%% Online Document
%\bibitem{phys-online} J. Dod, in \textit{The Dictionary of Substances and Their Effects}, Royal Society of Chemistry. (Available via DIALOG, 1999), 
%\url{http://www.rsc.org/dose/title of subordinate document. Cited 15 Jan 1999}
%%
%% Monograph
%\bibitem{phys-mono} H. Ibach, H. L\"uth, \textit{Solid-State Physics}, 2nd edn. (Springer, New York, 1996), pp. 45-56 
%%
%% Journal article
%\bibitem{phys-journal} S. Preuss, A. Demchuk Jr., M. Stuke, Appl. Phys. A \textbf{61}
%%
%% Journal article by DOI
%\bibitem{phys-DOI} M.K. Slifka, J.L. Whitton, J. Mol. Med., doi: 10.1007/s001090000086
%%
%% Contribution 
%\bibitem{phys-contrib} S.E. Smith, in \textit{Neuromuscular Junction}, ed. by E. Zaimis. Handbook of Experimental Pharmacology, vol 42 (Springer, Heidelberg, 1976), p. 593
%%
%\bigskip
%%

\end{document}